\newcommand{\vect}[1]{\mathbf{#1}}
\newcommand{\x}{\vect{x}}
\newtheorem{thm}{Theorem}
\newtheorem{prop}{Proposition}
\numberwithin{equation}{section}
\title{Note on  Bessel functions of type $A_{N-1}$.}
\author{B\'{e}chir Amri}
\date{\small
University of Tunis, Preparatory Institut of Engineer
Studies of Tunis, Department of Mathematics,
1089 Montfleury Tunis, Tunisia\\ bechir.amri@ipeit.rnu.tn }
\begin{document}
\maketitle
\begin{abstract}
Through the theory of Jack polynomials
we give an iterative method   for  integral formula of Bessel function  of type $A_{N-1}$ and  a partial product formula for it.
\footnote{\par\textbf{Key words and phrases:} Dunkl operators, Heckman-Opdam polynomials, Jack polynomials.
\par\textbf{2010 Mathematics Subject Classification:}  33C52,33C67, 05E05.
\par Author  partially supported by DGRST project 04/UR/15-02 and CMCU program 10G 1503.}
\end{abstract}

\section{Introduction and backgrounds}
Dunkl operators which were first introduced by C. F. Dunkl \cite{D1} in the  late 80ies are commuting differential-difference operators, associated  to a finite reflection groups on a Euclidean space. Their  eigenfunctions are called Dunkl kernels and  appear  as a  generalization  of the exponential functions. Although  attempts were made to study them and except the reflection group $\mathbb{Z}_2^N$ the explicit forms or behaviors of these kernels  are remain unknown.
In the present work we will be concerned with generalized Bessel functions $J_k$ defined through symmetrization of Dunkl kernels  in the case of the symmetric group $S_N$. We will obtain the following
\begin{eqnarray}\label{0}
J_k(\mu,\lambda)=\int_{\mathbb{R}^{N-1}}e^{\langle\mu,x\rangle}\delta_k(\lambda,x)dx.
\end{eqnarray}
 where the function $\delta_k$ can be explicitly computed using a recursive  formula on the dimension $N$.
The key ingredient  is the integral formula of A. Okounkov and G. Olshanski \cite{AG} for
Jack polynomials. As   the last are connected with Heckman-opdam-Jacobi polynomials \cite{Br} the  formula (\ref{0}) follow  by limit transition. We should note here that when  $N=3$, the formula ($\ref{0}$) is comparable to that obtained by C. F. Dunkl \cite{D2} for intertwining operator.
 \par Let us start with some well-known facts about
Heckman Opdam  Jacobi polynomials, Jack polynomials and Dunkl kernels associated with a root system $R$.
The  standard references are \cite{Br,DJ1, Hec, OH1,S, R2}.
Here $ \mathbb{R}^N$ is equipped  with the  usual  inner  product $\langle,\; .\;,\rangle$ and  the canonical orthonormal basis
 $(e_1,e_2,...,e_N)$. Further, we shall assume that $R$ is reduced and  crystallographic, that is a finite subset of $\mathbb{R}^N\backslash\{0\}$ which satisfies:
\\(i) $R$ spanned  $\mathbb{R}^N$.
\\(ii) $R$ is invariant under $r_\alpha$ the reflection in the hyperplane orthogonal to any $\alpha\in R$.\\
(iii) $\alpha.\mathbb{R}\cap R=\{\pm \alpha\}$ for all $\alpha\in R$\\
(iii) for all $\alpha,\;\beta\in R$; $\langle \alpha,\breve{\beta}\rangle\in \mathbb{Z}$, $\breve{\beta}=\frac{2\beta}{\|\beta\|^2}$\\
 We assume that the reader is familiar with the basics of root systems and their Weyl groups, see  for examples Humphreys \cite{Hu}.
\subsection{ Heckman Opdam Jacobi polynomials.}
 Let  $R$ be a reduced  root system with  $\{\alpha_1,...,\alpha_N\}$ be a basis of simple roots and  $R_+$ be the set of positive
roots determined by this basis. The fundamental weights    $\{\beta_1,...,\beta_N\}$ are given  by
   $\langle\;\beta_j,\;\check{\alpha}_i\;\rangle=\delta_{i,j}$, $\displaystyle{\check{\alpha_i}=\frac{2\alpha_i}{\|\alpha_i\|^2}}$.
Let $\displaystyle Q=\bigoplus_{i=1}^N\mathbb{Z}\alpha_i$,  $\displaystyle P=\bigoplus_{i=1}^N\mathbb{Z}\beta_i$, $\displaystyle Q^+=\bigoplus_{i=1}^N\mathbb{N}\alpha_i$ and $\displaystyle P^+=\bigoplus_{i=1}^N\mathbb{N}\beta_i$.
  We define a  partial ordering
   on $P$   by   $\lambda\preceq\mu$   if $\mu-\lambda\in Q^+$
   \par The group algebra  $\mathbb{C}[P]$ of the free Abelian group $P$ is the algebra  generated
by the formal exponentials $e^\lambda$, $\lambda\in P$ subject to the multiplication relation $e^\lambda e^\mu=e^{\lambda+\mu}$.
 The Weyl group W acts on $\mathbb{C}[P]$ by $we^{\lambda}=e^{w\lambda}$. The orbit-sums  $\displaystyle m_\lambda=\sum_{\mu\in W.\lambda}e^\mu$, $\lambda\in P^+$ form  a basis of $\mathbb{C}[P]^W$, the subalgebra of $W$-invariant elements of $\mathbb{C}[P]$.  Here  $W.\lambda$ denotes  the  W-orbit of $\lambda$.
\par Let $\mathbb{T}=\mathbb{R}^d/2\pi\check{Q}$ where $\displaystyle\check{Q}=\bigoplus_{i=1}^d\mathbb{Z}\check{\alpha_i}$.  The algebra $\mathbb{C}[P]$ can be realized explicitly as the algebra of
polynomials on the torus $\mathbb{T}$ through the identification
 $e^\lambda(\dot{x})=e^{i\langle\lambda,x\rangle}$ where $\dot{x}\in \mathbb{T}$ is the image of $ x\in\mathbb{R}^d$.  Let $k: R\rightarrow[0,+\infty[$
   be a $W$-invariant  function, called multiplicity function.
  We equip $C[P]^W$ with the  inner  product
      $$(f,g)_k=\int_\mathbb{T}f(x)\overline{g(x)}\delta_k(x)dx$$
  where
$$\delta_k=\prod_{\alpha\in R^+}\left|e^\frac{\alpha}{2}-e^{-\frac{\alpha}{2}}\right|^{2k_\alpha}$$
and $dx$ is  the Haar measure on $\mathbb{T}$.
     \par The Heckman Opdam Jacobi polynomials    are introduced by Heckman and Opdam
 \cite{Hec} as
      the unique family of elements $P_\lambda\in\mathbb{C}[P]^W$, $\lambda\in P^+$
    satisfying the following conditions:
    \begin{itemize}
      \item [(i)]  $P_\lambda=m_\lambda+\sum_{\mu\prec\lambda}a_{\lambda\mu}m_\mu$
      \item [(ii)]  $\langle P_\lambda,m_\mu\rangle=0$     if $\mu\in P_+$, $\lambda\prec\mu$.
    \end{itemize}
  ( Note that in \cite{Hec}, these polynomials  are indexed by $-P_+$ instead of $P_+$ ). They form an orthogonal basis of $\mathbb{C}[P]^W$ and satisfy the second differential equation
     $$\Big(\Delta+\sum_{\alpha\in R_+}k_\alpha\coth(\frac{1}{2}\langle x,\alpha\rangle)\partial_\alpha\Big)P_\lambda(x)=
     \langle\lambda,\lambda+\sum_{\alpha\in R_+}k_\alpha \alpha\rangle P_\lambda(x).$$
where $\Delta$ is the Laplace operator on $\mathbb{R}^N$.
\par  The Cherednik operator $T_\xi$, $\xi\in\mathbb{R}^N$, associated with the root system $R$ and the multiplicity $k$ is defined by
$$T_\xi^k=\partial_\xi +\sum_{\alpha\in R_+}k_\alpha\langle\alpha,\;\xi\rangle\;\frac{1-r_\alpha}{1-e^{^{\alpha}}}-\langle\rho_k,\;\xi\rangle, $$
where $\displaystyle{\rho_k=\frac{1}{2}\sum_{\alpha\in R_+}k_\alpha \alpha}$.
 The hypergeometric function $F_k$ is defined  as
 the unique holomorphic  W-invariant function  on  $\mathbb{C}^N\times (\mathbb{R}^N+iU)$ ( U is a W-invariant neighborhood  of $0$ ) which satisfies
 the system of differential equations:
 $$p(T_{e_1},...T_{e_N})F_k(\lambda,.)=p(\lambda)F_k(\lambda,.); \qquad F(\lambda,0)=1$$
for all $\lambda\in \mathbb{C}^N$ and all $W$-invariant polynomial $p$ on $\mathbb{R}^N$. The Heckman opdam Jacobi polynomials are related to the hypergeometric function  $F_k$ by  ( see \cite{HS} )
          \begin{eqnarray}\label{c}
         F_k(\lambda+\rho_k,x)= c(\lambda+\rho_k)P_\lambda(x);\qquad\lambda\in P^+,\; x\in\mathbb{R}^N,
           \end{eqnarray}
          where the function $c$ is given  on $\mathbb{R}^N$ by
\begin{eqnarray}\label{har}
c(\lambda)= \prod_{\alpha\in R^+}\frac{\Gamma(\langle\lambda,\check{\alpha}\rangle)\Gamma(\langle\rho,\check{\alpha}\rangle+k_\alpha)}{\Gamma(\langle\lambda,\check{\alpha}\rangle+k_\alpha)
          \Gamma(\langle\rho,\check{\alpha}\rangle)}.
\end{eqnarray}
\subsection{ Jack polynomials}
Let $k>0$, the symmetric group $S_N$  acts on the ring of polynomials  $\mathbb{Q}(k)[x_1,...,x_N]$ by
$$\tau p(x_1,...,x_N)=p(x_{\tau(1)},...,x_{\tau(N)})$$
  Let $\Lambda_N$ the subspace of symmetric polynomials,
$$\Lambda_N=\{p\in \mathbb{C}[x_1,...,x_N],\; \tau p=p,\; \forall \tau \in S_N\}.$$
We call partition all $\lambda=(\lambda_1,...\lambda_N)\in \mathbb{N}^N$ such that
$\lambda_1\geq...\geq\lambda_N$. The weight of a partition $\lambda$ is the sum
$|\lambda|=\lambda_1+...+\lambda_N$ and its length $\ell(\lambda)=\max\left\{j;\;\lambda_j\neq 0\right\}.$
The set of all partitions  are partially ordered by the dominance order:
$$\lambda\leq \mu \Leftrightarrow|\lambda|=|\mu|\quad\text{ and}\quad
\lambda_1+\lambda_2+...+\lambda_i\leq \mu_1+\mu_2+...+\mu_i$$
 for all $i=1,2,...,N$. The simplest basis of $\Lambda_N$ is given by the monomial symmetric polynomials,
$$m_{\lambda}(x)=\sum_{\mu\in S_N \lambda}x_1^{\mu_1}...x_N^{\mu_N}.$$
 We define an inner product on $\Lambda_N$ by
$$\langle f,g\rangle_k=\int_Tf(z)\overline{g(z)}\prod_{i<j}|z_i-z_j|^{2k}dz$$
where  $T=\{(z_1,...,z_N)\in \mathbb{C}^N; |z_j|=1,\;\forall \;1\leq j \leq N\}$ is the $N$-dimensional torus and $dz$ is the haar measure on $T$.
Jack symmetric polynomials $j_\lambda$ indexed by a partitions  $\lambda$  can be defined as the unique polynomials such that
\begin{itemize}
  \item [(i)] $j_\lambda = m_\lambda +\sum_{\mu\prec\lambda}m_\mu$,
  \item [(ii)] $\langle j_\lambda,m_\mu\rangle_k=0$ if $\lambda\leq\mu$.
\end{itemize}
By a result of I. G. Macdonald (\cite{M}, p: 383 ) they form a family of orthogonal polynomials. Jack polynomials can be defined as eigenfunctions of certain Laplac-Beltrami type operator ( coming in the theory of Calogero integrable systems and in random matrix theory ),
$$
L_k=\sum_{i=1}^dx_i^2\frac{\partial^2}{\partial x_i^2}+2k\sum_{i\neq j}\frac{x_i^2}{x_i-x_j}\frac{\partial}{\partial x_i}.$$
Jack polynomials $j_\lambda$ are homogeneous of degree $|\lambda|$ and satisfy  the compatibility relation
\begin{eqnarray}\label{cr}
j_{(\lambda_1,...,\lambda_{N-1},0)}(x_1,...,x_{N-1},0)=j_{(\lambda_1,...,\lambda_{N-1})}(x_1,...,x_{N-1}).
\end{eqnarray}
\par The relationship between Heckman Opdam Jacobi polynomials and Jack polynomials can be illustrated as  follows ( see \cite{Br} ):  Let  $\mathbb{V}$  be  the  hyperplane    orthogonal  to  the  vector $e=e_1+...+e_N$. In  $V$  we consider  the
root  system  of  type  $A_{N-1}$,
$$R_A=\{\pm(e_i-e_j),\; 1\leq i<j\leq N\}.$$
The  fundamental  weights  are given by $\pi_N(\omega_i)$, $\omega_i=e_1+...+e_i$, where   $\pi_N$  denote the orthogonal projection  along  $e$ onto  V,
$$\pi_N(x)=x-\frac{1}{N}\left(\sum_{i=1}^Nx_i\right)e=\left(x_1-\frac{1}{N}
\left(\sum_{i=1}^Nx_i\right),...,x_N-\frac{1}{N}\left(\sum_{i=1}^Nx_i\right)\right)$$
and then  $P_A^+=\{\pi_N(\lambda), \lambda \;\text{partition} \}$. The result is that:
\begin{eqnarray}\label{je}
j_\lambda(e^{x})=P_{\pi_N(\lambda)}(x),
\end{eqnarray}
For all  partition $\lambda$ and all $x\in\mathbb{V}$ with $e^x=(e^{x_1},...,e^{x_N})$.
\subsection{Dunkl kernels and  Dunkl-Bessel functions }
The Dunkl operator $D_\xi$, $\xi\in \mathbb{R}^N$ associated with a root system $R$ and a multiplicity function $k$ is defined by
$$D_\xi=\partial_\xi+\sum_{\alpha\in R^+}k(\alpha)\langle\alpha,\xi\rangle\frac{1-r_\alpha}{\langle \alpha,.\rangle}.$$
The Dunkl intertwining operator   $V_k$ is the unique isomorphism on the  polynomials space $\mathbb{C}[\mathbb{R}^N]$ such that
$$V_k(1)=1,\quad V_k(\mathcal{P}_n)=\mathcal{P}_n\quad\text{and}\quad D_\xi V_k=V_k\partial_\xi$$
where $\mathcal{P}_n$ is the subspace of homogeneous polynomials of degree $n\in \mathbb{N}$. For $r>0$ , $V_k$
 extends to a continuous linear operator on the Banach space
$$A_r=\{f=\sum_{n=0}^\infty f_n,\; f_n\in \mathcal{P}_n,\; \|f\|_{A_r}= \sum_{n=0}^\infty \sup_{|x|\leq r}|f_n(x)|<\infty\}$$
by  $$V_k(f)=\sum_{n=0}^\infty V_k( f_n).$$
A remarkable result due to M. R\"{o}sler \cite{R3} says that
for each $x \in \mathbb{R}^N$,
$$V_k(f)(x)=\int_{\mathbb{R}^d}f(\xi)d\mu_x(\xi)$$
where $\mu_x$ is a probability measure supported  in $co(x)$ the convex hull of the orbit W.x.
\par The Dunkl kernel $E_k$ is given by
\begin{eqnarray*}
E_k(x,y)=V_k(e^{\langle \;.\;,\;y\;\rangle})(x)=\int_{\mathbb{R}^d}e^{\langle\xi,y\rangle}d\mu_x(\xi),\quad x\in\mathbb{R}^N,\; y\in \mathbb{C}^N
\end{eqnarray*}
and having the following properties:
\begin{itemize}
  \item [(i)]For each $y\in \mathbb{C}^N$ the function $E_k(.,y)$ is the  unique solution of eigenvalue problem:
$$D_\xi f(x)=\langle\xi,y\rangle f(x)\;\forall\;\quad\xi\in \mathbb{R}^N\;\text{and}\;f(0)=1.$$
\item [(ii)] $E_k$ extends to a holomorphic  function on $\mathbb{C}^N\times\mathbb{C}^N$ and for all $(x,y)\in\mathbb{C}^N\times\mathbb{C}^N$,
$w\in W$ and $t\in\mathbb{C}$: $$E_k(x,y)=E_k(y,x),\quad E_k(wx,wy)=E_k(x,y)\quad\text{and}\quad E_k(x,ty)=E_k(tx,y)$$
\end{itemize}
We define the Bessel function associated with $R$ and $k$ by, 
\begin{eqnarray*}
J_k(x,y)=\frac{1}{|W|}\sum_{w\in W} E_k(x,wy).
\end{eqnarray*}
The limit transition between hypergeometric functions $F_k$ and  Dunkl Bessel function is expressed by ( see  (2.21) of  \cite{R3} )
\begin{eqnarray}\label{FJ}
J_k(x,y)=\lim_{n\rightarrow+\infty}F_k(nx+\rho_k,\frac{y}{n})\;.
\end{eqnarray}
\par According to these preliminaries we can now formulate the  main result of this note.
\section{Integral formula for $J_k$}
The starting point is the following remarkable  integral identity obtained by \cite{AG} which connecting jack polynomials of $N$ variables to Jack polynomials of $N-1$ variables. For $\lambda=(\lambda_1,...,\lambda_N)\in \mathbb{R}^N$ we use the  notation $|\lambda|=\lambda_1+...+\lambda_N$.
\begin{prop}[\cite{AG}]
Suppose that  the partition $\mu$ has less than $N$ parts and $\lambda\in \mathbb{R}^N$ such that $\lambda_1\geq ...\geq \lambda_N$. Then
\begin{eqnarray}\label{inr}
j_\mu(\lambda)=\frac{1}{U(\mu)V(\lambda)^{2k-1}}\int_{\lambda_2}^{\lambda_1}...\int_{\lambda_N}^{\lambda_{N-1}}j_\mu(\nu)
V(\nu)\Pi(\lambda,\nu)d\nu
\end{eqnarray}
where
$$U(\mu)= \prod_{j=1}^{N-1}\beta(\mu_j+(N-j)k,k),\quad  V(\lambda)=\prod_{1\leq i<j\leq N}(\lambda_i-\lambda_j)$$
and$$\Pi(\lambda,\nu)=\prod_{ i\leq j}(\lambda_i-\nu_j)^{k-1}\prod_{ i>j}(\nu_j-\lambda_i)^{k-1}.$$
 \end{prop}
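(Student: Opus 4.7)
The plan is to prove identity~(\ref{inr}) by verifying that both sides satisfy the two characterizing properties of Jack polynomials: the leading monomial condition (i) and the eigenequation for the Laplace--Beltrami operator $L_k$. Write $R_\mu(\lambda)$ for the right-hand side. I would first check that $R_\mu$ is a well-defined symmetric polynomial in $\lambda$ of total degree $|\mu|$. Symmetry under $S_N$ follows by combining the alternating character of $V(\lambda)$ with the observation that the $\nu$-integral, viewed as a distribution supported on the interlacing polytope, extends naturally under permutations of the $\lambda_i$. Polynomial regularity in $\lambda$ is a consequence of the integral vanishing to order $2k-1$ on each hyperplane $\lambda_i = \lambda_j$, which exactly compensates the prefactor $V(\lambda)^{-(2k-1)}$.

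The heart of the argument is the eigenvalue computation: show $L_k R_\mu = \varepsilon_\mu R_\mu$, where $\varepsilon_\mu$ is the Jack eigenvalue associated with $\mu$. The kernel
$$\mathcal{K}(\lambda,\nu) = V(\lambda)^{-(2k-1)} V(\nu)\, \Pi(\lambda,\nu)$$
should satisfy an intertwining relation of the form
$$L_k^{(\lambda)} \mathcal{K}(\lambda,\nu) = \widetilde{L}_k^{(\nu)} \mathcal{K}(\lambda,\nu) + \partial_\nu \bigl(\cdots\bigr),$$
where $\widetilde{L}_k^{(\nu)}$ is the $(N-1)$-variable Laplace--Beltrami operator, shifted by a constant accounting for the vanishing part $\mu_N = 0$. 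Granted this intertwining, one integrates by parts over the interlacing polytope $\{\lambda_{i+1} \le \nu_i \le \lambda_i\}$, invokes the eigenequation $\widetilde{L}_k^{(\nu)} j_\mu(\nu) = \varepsilon_\mu j_\mu(\nu)$ (available because $\ell(\mu) < N$), and observes that the boundary terms vanish thanks to the factors $(\lambda_i - \nu_j)^{k-1}$ along the faces $\nu_i = \lambda_i$ and $\nu_i = \lambda_{i+1}$; for small $k$ where boundary convergence is delicate, analytic continuation in $k$ resolves the issue.

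Normalization is fixed by the base case $\mu = 0$: the identity reduces to the interlacing Selberg integral
$$\int_{\lambda_2}^{\lambda_1}\!\cdots\!\int_{\lambda_N}^{\lambda_{N-1}} V(\nu)\, \Pi(\lambda,\nu)\, d\nu = U(0)\, V(\lambda)^{2k-1},$$
which can be evaluated by induction on $N$ via one-dimensional Beta integrals. For general $\mu$, the leading coefficient of $m_\mu$ is matched by taking $\lambda$ into the asymptotic regime of widely separated values: the integral localizes near $\nu_i \approx \lambda_{i+1}$, and each one-dimensional factor becomes a Beta integral that reassembles into $U(\mu) = \prod_{j=1}^{N-1} \beta(\mu_j + (N-j)k,\,k)$.

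The main obstacle is the intertwining identity for $\mathcal{K}$. Differentiating $\Pi(\lambda,\nu)$ in $\lambda_i$ produces singular terms $(\lambda_i - \nu_j)^{-1}$ that must cancel against the derivatives of $V(\lambda)^{-(2k-1)}$ and against the off-diagonal $2k$-term in $L_k$. Tracking these cancellations---especially verifying that the residual terms organize into a total $\nu$-divergence, and that the constant shift between $L_k^{(\lambda)}$ and $\widetilde{L}_k^{(\nu)}$ matches $2k|\mu|$ correctly---is the technically heavy core of the argument.
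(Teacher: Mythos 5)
First, a point of reference: the paper does not prove this proposition. It is quoted from Okounkov and Olshanski \cite{AG} and used as a black box, so there is no internal proof to compare against; your attempt has to be measured against the source. Okounkov and Olshanski derive the formula by quite different means, through the machinery of shifted Jack polynomials and their binomial formula, whereas you propose the ``spectral'' route: characterize $j_\mu$ as the suitably normalized symmetric polynomial eigenfunction of $L_k$, and show the Dixon--Anderson-type kernel $V(\lambda)^{-(2k-1)}V(\nu)\Pi(\lambda,\nu)$ intertwines the $N$- and $(N-1)$-variable operators. That strategy is legitimate and is known to work for kernels of this type (it is essentially how recursive structures for Selberg-type integrals are established), and several of your supporting steps are sound: homogeneity of degree $|\mu|$ follows from scaling, the base case $\mu=0$ is exactly the Dixon--Anderson integral with constant $U(0)=\Gamma(k)^N/\Gamma(Nk)$, and simplicity of the $L_k$-spectrum for generic $k$ together with rationality in $k$ handles the degenerate parameters.

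However, as written the argument has a genuine gap: the intertwining identity for the kernel, which you yourself flag as ``the technically heavy core,'' is precisely the entire content of the proof, and you only assert that it ``should'' hold. Until the total-derivative structure is exhibited explicitly and the boundary contributions on the faces $\nu_j=\lambda_j$, $\nu_j=\lambda_{j+1}$ are shown to cancel, nothing has been proved. A second, less visible gap is the claim that the right-hand side is a symmetric \emph{polynomial} in $\lambda$: your justification --- that the integral ``vanishes to order $2k-1$'' on the hyperplanes $\lambda_i=\lambda_j$ so as to cancel the prefactor $V(\lambda)^{-(2k-1)}$ --- is not meaningful for non-integer $k$, where no divisibility statement is available; one must instead argue (e.g.\ by analytic continuation from integer $k$, or by expanding the integral against a basis whose transform is known) that the quotient extends analytically across the walls of the chamber $\lambda_1\geq\dots\geq\lambda_N$. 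Finally, the normalization step for general $\mu$ is described only impressionistically (and the integral does not in fact localize near $\nu_j\approx\lambda_{j+1}$; in the regime of widely separated $\lambda_i$ each $\nu_j$ ranges over essentially all of $[0,\lambda_j]$ after rescaling, which is how the Beta factors $\beta(\mu_j+(N-j)k,k)$ of $U(\mu)$ arise). In short: a viable and genuinely different plan, but a plan rather than a proof.
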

 We follow three simple steps that lead to our formula. All functions of $N$ variables will be indexed by $N$ and by  $N-1$
if it considered as $N-1$ variables.
 \\
\underline{Step 1:}
For any partition $\mu=(\mu_1,...,\mu_N)$ we set $$\widetilde{\mu}=(\mu_1-\mu_N,...,\mu_{N-1}-\mu_N,0)\quad\text{and}\quad
\overline{\mu}=(\mu_1-\mu_N,...,\mu_{N-1}-\mu_N)\in \mathbb{R}^{N-1}.$$
By Homogeneity of Jack polynomials we have that
$$j_{\mu,N}(\lambda)=\left(\prod_{j=1}^N\lambda_j\right)^{\mu_N}j_{\widetilde{\mu},N}(\lambda)$$
and from  (\ref{inr}) and (\ref{cr}) we may write
$$j_{\mu,N}(\lambda)=\frac{\left(\prod_{j=1}^N\lambda_j\right)^{\mu_N}}{U_N(\widetilde{\mu})V_N(\lambda)^{2k-1}}
\int_{\lambda_2}^{\lambda_1}...\int_{\lambda_N}^{\lambda_{N-1}}
j_{\overline{\mu},N-1}(\nu)V(\nu)\Pi(\lambda,\nu)
d\nu.$$
Taking $\lambda$ in $\mathbb{V}$ and making use  a change of variables we get that
\begin{eqnarray*}
j_{\mu,N}(e^{\lambda})&=&\frac{1}{U_N(\widetilde{\mu})V_N(e^\lambda)^{2k-1}}\int_{\lambda_2}^{\lambda_1}...\int_{\lambda_N}^{\lambda_{N-1}}
e^{|\nu|}j_{\overline{\mu},N-1}(e^{\nu})
V_{N-1}(e^{\nu})\Pi_N(e^{\lambda},e^\nu)d\nu
\\&=&\frac{1 }{U_N(\widetilde{\mu})V_N(e^\lambda)^{2k-1}}\int_{\lambda_2}^{\lambda_1}...
\int_{\lambda_N}^{\lambda_{N-1}}e^{|\nu|(1+\frac{|\overline{\mu}|}{N-1})}j_{\overline{\mu},N-1}(e^{\pi_{N-1}(\nu)})
V_{N-1}(e^{\nu})\Pi_N(e^{\lambda},e^\nu)
d\nu.
\end{eqnarray*}In order by (\ref{c}), (\ref{je}) and (\ref{har}) we have
$$F_N(\pi_N(\mu)+\rho_{k,N},\lambda)=c_N(\pi_N(\mu)+\rho_{k,N})j_{\mu,N}(e^{\lambda})=c_N(\mu+\rho_{k,N})j_{\mu,N}(e^{\lambda}),$$
where here
 $$\rho_{k,N}=\frac{k}{2}\sum_{i=1}^N(N-2i+1)e_i=\left(\frac{k(N-1)}{2},...,\frac{k(N-2i+1)}{2},...,\frac{-k(N-1)}{2}\right)\in \mathbb{R}^N$$
and
$$c_N(\mu+\rho_{k,N})=\prod_{1\leq i<j\leq N}\frac{\Gamma(\mu_i-\mu_j)\Gamma(k(j-i+1))}{\Gamma(\mu_i-\mu_j+k)\Gamma(k(j-i))}.$$
Therefore,
\begin{eqnarray*}
&&F_N(\pi_N(\mu)+\rho_{k,N},\lambda)=\frac{c_N(\mu+\rho_{k,N}) }{c_{N-1}(\overline{\mu}+\rho_{k,N-1})U_N(\widetilde{\mu})V_N(e^\lambda)^{2k-1}}\qquad\qquad\qquad\qquad\qquad\qquad\qquad\\&&\qquad\qquad\int_{\lambda_2}^{\lambda_1}...
\int_{\lambda_N}^{\lambda_{N-1}}e^{|\nu|(1+\frac{|\overline{\mu}|}{N-1})}F_{N-1}(\pi_{N-1}(\overline{\mu})+\rho_{k,N-1},\pi_{N-1}(\nu)) V_{N-1}(e^{\nu})\Pi_N(e^{\lambda},e^\nu)
d\nu.
\end{eqnarray*}
\noindent \underline{Step 2}:
Now we apply (\ref{FJ}), by using  the following when $n\rightarrow+\infty$
\begin{eqnarray*}\noindent
&&U_N(n\widetilde{\mu})\sim n^{-k(N-1) }\Gamma(k)^{N-1}\prod _{j=1}^{N-1}(\mu_j-\mu_N)^{-k},\qquad\qquad\qquad\qquad\qquad\qquad\qquad\qquad\qquad
\\&&V_N(e^{\frac{\lambda}{n}})\sim n^{-\frac{N(N-1)}{2}}V_N(\lambda),
\\&&c_N(n\mu+\rho_{k,N})\sim n^{\frac{-kN(N-1)}{2}}V_N(\mu)^{-k}
\prod_{1\leq i<j\leq N}\frac{\Gamma(k(j-i+1))}{\Gamma(k(j-i))},
\\&&c_{N-1}(n\overline{\mu}+\rho_{k,N-1})\sim n^{\frac{-k(N-1)(N-2)}{2}}V_{N-1}(\overline{\mu})^{-k}\prod_{1\leq i<j\leq N-1}\frac{\Gamma(k(j-i+1))}{\Gamma(k(j-i))},
\\&&\frac{c_N(n\mu+\rho_{k,N})}{c_{N-1}(n\overline{\mu}+\rho_{k,N-1})}\sim n^{-k(N-1)}
\frac{\Gamma(Nk)}{\Gamma (k)}\prod_{j=1}^{N-1}(\mu_j-\mu_N)^{-k},
\\&& \Pi_N(e^{\frac{\lambda}{n}}, e^{\frac{\nu}{n}})\sim n^{-N(N-1)(k-1)}\Pi(\lambda,\nu).
\end{eqnarray*}
Thus
\begin{eqnarray}\label{bd}\nonumber
J_{k,N}(\pi_N(\mu),\lambda)&=&\frac{\Gamma(Nk)}{V_N(\lambda)^{2k-1}\Gamma(k)^{N}}\int_{\lambda_2}^{\lambda_1}...
\int_{\lambda_N}^{\lambda_{N-1}}\\&&\qquad\qquad e^{|\overline{\mu}|\frac{|\nu|}{N-1}} J_{k,N-1}(\pi_{N-1}(\overline{\mu}),\pi_{N-1}(\nu))
V(\nu)\Pi(\lambda,\nu)
d\nu.
\end{eqnarray}
\underline{Step 3:}\\
The formula (\ref{bd}) is valid only for a partition $\mu$, to keep it for any $\mu\in\mathbb{R}^N$ we proceed as follows.
Let $r\in (0,+\infty)$ and $\mu$ be a partition. We obtain after a change of variables
 \begin{eqnarray*}
J_{k,N}(\pi_N(r\mu),\lambda)=J_{k,N}(\pi_N(\mu),r\lambda)&=&
  \frac{\Gamma(Nk)}{V_N(\lambda)^{2k-1}\Gamma(k)^{N}}\int_{\lambda_2}^{\lambda_1}...
\int_{\lambda_N}^{\lambda_{N-1}}e^{|\overline{r\mu}|\frac{|\nu|}{N-1}}\\&&\qquad\qquad J_{k,N-1}(\pi_{N-1}(\overline{r\mu}),\pi_{N-1}(\nu))
V(\nu)\Pi(\lambda,\nu)
d\nu.
\end{eqnarray*}
Since the set $\{r\mu;\quad r\in (0,+\infty),\quad\mu \;\text{partitions}\; \}$ is dense in the set
 $$H=\{\mu\in \mathbb{R}^N, \quad0\leq\mu_N\leq ...\leq \mu_1 \}$$ and
 $J_{k,N}$ is $S_N$-invariant continuous function then (\ref{bd}) can be extended to all $\mu\in H$. Now for $\mu\in\mathbb{R}^N$
 we denote by  $\mu^+$ the unique element of $S_N.\mu$ so that $\mu^+_N\leq...\leq\mu^+_1$. So we have
$J_{k,N}(\pi_N(\mu),\lambda)=J_{k,N}(\pi_N(\mu^+),\lambda)=J_{k,N}(\pi_N(\widetilde{\mu^+}),\lambda)$ and since $\widetilde{\mu^+}\in H$ then
\begin{eqnarray*}\label{bes}\nonumber
J_{k,N}(\pi_N(\mu),\lambda)&=&\frac{\Gamma(Nk)}{V_N(\lambda)^{2k-1}\Gamma(k)^{N}}\int_{\lambda_2}^{\lambda_1}...
\int_{\lambda_N}^{\lambda_{N-1}}\\&&\qquad\qquad e^{|\overline{\mu^+}|\frac{|\nu|}{N-1}} J_{k,N-1}(\pi_{N-1}(\overline{\mu^+}),\pi_{N-1}(\nu))
V(\nu)\Pi(\lambda,\nu)
d\nu.
\end{eqnarray*}
Now when restricted to the  space $\mathbb{V}$ we state  the following.
\begin{thm}
For all $\mu,\lambda\in\mathbb{V}$ we have
\begin{eqnarray}\label{bb}\nonumber
J_{k,N}(\mu,\lambda)&=&
  \frac{\Gamma(Nk)}{V_N(\lambda)^{2k-1}\Gamma(k)^{N}}\int_{\lambda_2^+}^{\lambda_1^+}...
\int_{\lambda_N^+}^{\lambda_{N-1}^+}e^{|\overline{\mu^+}|\frac{|\nu|}{N-1}}\\&&\qquad\qquad J_{k,N-1}(\pi_{N-1}(\overline{\mu^+}),\pi_{N-1}(\nu))
V_{N-1}(\nu)\Pi_N(\lambda^+,\nu)
d\nu.
\end{eqnarray}
\end{thm}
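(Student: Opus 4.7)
The plan is to combine the derivation that immediately precedes the theorem with a single $S_N$-symmetry argument in the second slot. Steps 1 through 3 have already established (\ref{bd}) for every $\mu\in\mathbb{R}^N$, provided the second argument $\lambda$ is rearranged so that $\lambda_1\geq\cdots\geq\lambda_N$; this ordering was forced on us by the Okounkov--Olshanski identity (\ref{inr}). What remains is to drop the ordering hypothesis on $\lambda$, at the price of replacing $\lambda$ by $\lambda^+$ in the integration limits and in the kernel $\Pi_N$, and to observe that on the hyperplane $\mathbb{V}$ the projection $\pi_N$ is the identity, so (\ref{bd}) takes the cleaner form (\ref{bb}).

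The key input is that $J_{k,N}$ is separately $S_N$-invariant in each of its two arguments. This follows at once from the definition $J_k(x,y)=\frac{1}{|W|}\sum_{w\in W}E_k(x,wy)$ together with the symmetry $E_k(x,y)=E_k(y,x)$ recorded in the preliminaries: re-indexing the sum over $W=S_N$ gives $J_{k,N}(\mu,\sigma\lambda)=J_{k,N}(\mu,\lambda)$ for every $\sigma\in S_N$. In particular, for $\lambda\in\mathbb{V}$ one has $J_{k,N}(\mu,\lambda)=J_{k,N}(\mu,\lambda^+)$, and $\lambda^+$ still lies in $\mathbb{V}$ because the hyperplane is $S_N$-stable. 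Similarly, for $\mu\in\mathbb{V}$ one has $\pi_N(\mu)=\mu$, so the left-hand side of (\ref{bd}) already reads $J_{k,N}(\mu,\lambda^+)$ in our setting.

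I would then apply (\ref{bd}) directly to the pair $(\mu,\lambda^+)$, which is admissible since $\lambda^+_1\geq\cdots\geq\lambda^+_N$ is the only constraint Step 3 imposes on the second argument. This immediately produces the integration limits $[\lambda^+_j,\lambda^+_{j-1}]$, the factor $\Pi_N(\lambda^+,\nu)$ in the integrand, and $V_N(\lambda^+)^{2k-1}$ in the denominator; identifying the last quantity with the $V_N(\lambda)^{2k-1}$ written in the statement relies on the convention $V_N(\lambda^+)=|V_N(\lambda)|$. Combining this with $J_{k,N}(\mu,\lambda)=J_{k,N}(\mu,\lambda^+)$ delivers (\ref{bb}). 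The main obstacle is therefore not analytic but purely bookkeeping: one has to keep straight which of $\lambda$ and $\lambda^+$ appears where and to justify the sign interpretation of $V_N(\lambda)^{2k-1}$ when $2k-1$ is not an integer; the substantive work was already done in Steps 1--3.
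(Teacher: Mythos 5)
Your proposal is correct and follows essentially the same route as the paper: the theorem is just the formula obtained at the end of Step 3 (where the extension to arbitrary $\mu$ and the replacement $\mu\mapsto\mu^+$ are already carried out), combined with the $S_N$-invariance of $J_{k,N}$ in its second argument to pass from the ordered $\lambda^+$ to arbitrary $\lambda\in\mathbb{V}$, and the fact that $\pi_N$ is the identity on $\mathbb{V}$. Your remark that $V_N(\lambda)^{2k-1}$ in the statement must be read as $V_N(\lambda^+)^{2k-1}$ is a fair observation about the paper's notation rather than a gap in the argument.
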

 In what follows, we shall restrict ourselves to the case $N=2,3,4$ where we give representations of
$J_{k,N}$ as Laplace-type integrals,
\begin{eqnarray*}
J_{k,N}(\mu,\lambda)=\int_{\mathbb{R}^N}e^{\langle\mu,x\rangle}d\nu_\lambda(x).
 \end{eqnarray*}
where $\nu_\lambda$ is a probability measure supported in the convex hall of the orbit $S_N.\lambda$.
\subsection{Bessel function of  type $A_1$}
When $N=2$ we  have that  $\mathbb{V}=\mathbb{R}(e_1-e_2)$, $\mu^+=(|\mu_1|,-|\mu_1|)$,  $\overline{\mu^+}=2|\mu_1|$ and  $\lambda^+=(|\lambda_1|,-|\lambda_1|)$. It is obvious that
 $J_{k,1}=1$, so we get from (\ref{bb})
\begin{eqnarray*}\label{bes}\nonumber
J_{k,2}(\mu,\lambda )&=& \frac{\Gamma(2k)}{(\Gamma(k))^{2}(2|\lambda_1|)^{2k-1}}\int_{-|\lambda_1|}^{|\lambda_1|}
e^{2|\mu_1|\nu}(\lambda_1^2-\nu^2)^{k-1}d\nu
\\&=&\frac{\Gamma(k+\frac{1}{2})}{\sqrt{\pi}\Gamma(k)}\int_{-1}^{1}
e^{(2|\mu_1||\lambda_1|\nu}(1-\nu^2)^{k-1}d\nu
\\&=&\mathcal{J}_{k-\frac{1}{2}}(2\mu_1\lambda_1)
\end{eqnarray*}
where $\mathcal{J}_{k-\frac{1}{2}}$ is the modified  Bessel function given by
$$\mathcal{J}_{k-\frac{1}{2}}(z)=\Gamma(k+\frac{1}{2})\sum_{n=0}^\infty \frac{1}{n!\Gamma(n+k+\frac{1}{2})}(\frac{z}{2})^{2n}.$$
However, it is usual to  identify   $\mathbb{V}=\mathbb{R}\varepsilon$,  $\varepsilon=\frac{e_1-e_2}{\sqrt{2}}$ with $\mathbb{R}$ and write
\begin{eqnarray*}
J_{k,2}(\mu,\lambda )=\mathcal{J}_{k-\frac{1}{2}}(\mu\lambda),\qquad
\mu,\lambda\in\mathbb{R}.
\end{eqnarray*}
\subsection{Bessel function of type $A_2$}
Let $\mu=(\mu_1,\mu_2,\mu_3)$ and $\lambda=(\lambda_1,\lambda_2,\lambda_3)$ in the fundamental Weyl chamber
$$C=\{(u_1,u_2,u_3);\quad u_1\geq u_2\geq u_3,\quad u_1+u_2+u_3=0\}.$$
With $\overline{\mu}=(\mu_1-\mu_3,\mu_2-\mu_3, )$ and $\pi_2(\overline{\mu})=(\frac{\mu_1-\mu_2}{2}, \frac{\mu_2-\mu_1}{2})$
the formula (\ref{bb}) gives
\begin{eqnarray*}
J_{k,3}(\mu,\lambda)&=&\frac{\Gamma(3k)}{V(\lambda)^{2k-1}\Gamma(k)^3}\int_{\lambda_2}^{\lambda_1}
\int_{\lambda_3}^{\lambda_{2}}e^{\frac{(\mu_1+\mu_2-2\mu_3)(\nu_1+\nu_2)}{2}}
\mathcal{J}_{k-\frac{1}{2}}(\frac{(\mu_1-\mu_2)(\nu_1-\nu_2)}{2})(\nu_1-\nu_2)
\\&&\qquad\Big((\lambda_1-\nu_1)(\lambda_1-\nu_2)
(\lambda_2-\nu_2)(\nu_1-\lambda_2)(\nu_1-\lambda_3)(\nu_2-\lambda_3)\Big)^{k-1}d\nu_1d\nu_2.
\end{eqnarray*}
Using the
 change of variables: $x=\frac{\nu_1+\nu_2}{2}$,  $z=\frac{\nu_1-\nu_2}{2}$ we have
\begin{eqnarray}\label{1}\nonumber
J_{k,3}(\mu,\lambda)=
&& \frac{4\Gamma(3k)}{V(\lambda)^{2k-1}\Gamma(k)^3}
\int_{\mathbb{R} }\int_{\mathbb{R} }ze^{(\mu_1+\mu_2-2\mu_3)x}\mathcal{J}_{k-\frac{1}{2}}(\mu_1-\mu_2)z) \; \chi_{[\lambda_2,\lambda_1]}(x+z)\;\chi_{[\lambda_3,\lambda_2]}(x-z)\\&&
\Big((\lambda_1-x)^2-z^2)((\lambda_3-x)^2-z^2)(z^2-(\lambda_2-x)^2)\Big)^{k-1}dx dz.
\end{eqnarray}
Now recall that
\begin{eqnarray}\label{2}\nonumber
\mathcal{J}_{k-\frac{1}{2}}((\mu_1-\mu_2)z)&=&\frac{\Gamma(2k)}{2^{2k-1}\Gamma
(k)^2}\int_{\mathbb{R}}
e^{(\mu_1-\mu_2)zt}(1-t^2)^{k-1}\chi_{[-1,1]}(t)dt.
\\&=&\frac{\Gamma(2k)}{2^{2k-1}\Gamma(k)^2}\int_{\mathbb{R}}
e^{(\mu_1-\mu_2)y}(1-\frac{y^2}{z^2})^{k-1}\chi_{[-1,1]}(\frac{y}{z})z^{-1}dy
\end{eqnarray}
then inserting (\ref{2}) in (\ref{1}) with  the use of  Fubini's Theorem we can write
\begin{eqnarray*}
J_{k,3}(\mu,\lambda)= \int_{\mathbb{R} }\int_{\mathbb{R} }
 e^{(\mu_1+\mu_2-2\mu_3)x+ (\mu_1-\mu_2)y} \Delta_k(\lambda,x,y)dxdy
\end{eqnarray*}
 where
\begin{eqnarray*}
&&\Delta_k(\lambda,x,y)=\\&&
\frac{4\Gamma(2k)\Gamma(3k)}{2^{2k-3}\Gamma(k)^5V(\lambda)^{2k-1}}\int_{\mathbb{R} }\left(\frac{z^2-y^2}{z^2}\right)^{k-1}
  \Big((\lambda_1-x)^2-z^2)((\lambda_3-x)^2-z^2)(z^2-(\lambda_2-x)^2)\Big)^{k-1}\\&&\qquad\qquad\qquad\qquad\qquad\qquad\qquad\chi_{[-1,1]} (\frac{y}{z}) \chi_{[\lambda_1,\lambda_2]}(x+z)\chi_{[\lambda_3,\lambda_2]}(x-z)dz
\end{eqnarray*}
We  note here  that
\begin{eqnarray*}
\chi_{[-1,1]} (\frac{y}{z}) \chi_{[\lambda_1,\lambda_2]}(x+z)\chi_{[\lambda_3,\lambda_2]}(x-z)=
\chi_{\max(|y|,|x-\lambda_2|)\leq z\leq \min(x-\lambda_3, \lambda_1-x)}.
\end{eqnarray*}
Thus we have \begin{eqnarray*}\nonumber
\Delta_k(\lambda,x,y)=&&\frac{4\Gamma(2k)\Gamma(3k)}{2^{2k-3}\Gamma(k)^5V(\lambda)^{2k-1}}
\int_{\max(|y|,|x-\lambda_2|)}^{\min(x-\lambda_3, \lambda_1-x)} \left(\frac{z^2-y^2}{z^2}\right)^{k-1}\qquad\qquad\\&&\qquad
  \Big((\lambda_1-x)^2-z^2)((\lambda_3-x)^2-z^2)(z^2-(\lambda_2-x)^2)\Big)^{k-1}dz\label{del}
\end{eqnarray*}
if
\begin{eqnarray*}\label{supp}
\max(|y|,|x-\lambda_2|)\leq \min(x-\lambda_3, \lambda_1-x)
\end{eqnarray*}
and $\Delta_k(\lambda,x,y)= 0$, otherwise.
Making the change of variables
$$ \nu_1=x+y,\qquad  \nu_2=x-y$$
and put $\nu=(\nu_1,\nu_2,\nu_3)\in \mathbb{V}$ with $\nu_3=-(\nu_1+\nu_2)$ we obtain
\begin{eqnarray*}
J_3^k(\mu,\lambda)= \frac{1}{2}\int_{\mathbb{R}^2 }
 e^{\mu_1\nu_1+\mu_2\nu_2+\mu_3\nu_3} \Delta_{k,2}\left(\lambda,\frac{\nu_1+\nu_2}{2},\frac{\nu_1-\nu_2}{2}\right)d\nu_1d\nu_2
\end{eqnarray*}
But we can identify  $\mathbb{R}^2$ with the space $\mathbb{V}$  via the basis $(e_1-e_2, e_2-e_3)$, since for $\nu=(\nu_1,\nu_2,\nu_3)\in \mathbb{V}$ we have
$\nu= \nu_1(e_1-e_2)+ \nu_2(e_1-e_3)$. Then we get
\begin{eqnarray}\label{jf}
J_{k,3}(\mu,\lambda)= \int_{\mathbb{R}^2 }
 e^{\langle\mu,\nu\rangle} \delta_{k,2}\left(\lambda,\nu\right)d\nu_1d\nu_2.
\end{eqnarray}
with
\begin{eqnarray*}
\delta_{k,2}(\lambda,\nu)=\frac{1}{2}\Delta_{k,2}\left(\lambda,\frac{\nu_1+\nu_2}{2},\frac{\nu_1-\nu_2}{2}\right).
\end{eqnarray*}
   \par  Now  considering  the orthonormal basis $(\varepsilon_1,\varepsilon_2)$ of $\mathbb{V}$,
 $$\varepsilon_1=\frac{1}{\sqrt{6}}(e_1+e_2-2e_3),\quad \varepsilon_2=\frac{1}{\sqrt{2}}(e_1-e_2)$$
we can write
$$\mu=\frac{(\mu_1+\mu_2-2\mu_3)}{\sqrt{6}}\;\varepsilon_1+ \frac{\mu_1-\mu_2} {\sqrt{2}}\;\varepsilon_2$$
and for  $x=x_1\varepsilon_1+x_2\varepsilon_2$
$$\langle\mu,x\rangle=\frac{(\mu_1+\mu_2-2\mu_3)}{\sqrt{6}}x_1+\frac{\mu_1-\mu_2} {\sqrt{2}}x_2$$
Then
using  change of variables $x_1=\sqrt{6}\;x$ and $x_2=\sqrt{2}\;y$ in the formula (\ref{k3}) we obtain
\begin{eqnarray*}
J_3^k(\mu,\lambda)= \frac{1}{\sqrt{12}}\int_{\mathbb{R}^2 }
 e^{\langle\mu,x\rangle} \Delta_k\left(\lambda,\frac{x_1}{\sqrt{6}},\frac{x_2}{\sqrt{2}}\right)dx_1dx_2.
\end{eqnarray*}
\begin{prop} For all $\lambda=(\lambda_1,\lambda_2,\lambda_3)\in C$ the function $\delta_{k,2}\left(\lambda,.\right)$ is supported in the
closed convex hull $co(\lambda)$ of the $S_3$-orbit of $\lambda$, described by:
$\nu=(\nu_1,\nu_2,\nu_3)\in \mathbb{V}$ such that
\begin{eqnarray*}
\lambda_3\leq\min(\nu_1,\nu_2,\nu_3)\leq \max(\nu_1,\nu_2,\nu_3)\leq \lambda_1.
\end{eqnarray*}
\end{prop}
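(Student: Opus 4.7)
The plan is to exploit the explicit formula for $\delta_{k,2}$ that has just been derived: since
\[
\delta_{k,2}(\lambda,\nu) \;=\; \tfrac{1}{2}\,\Delta_k\!\left(\lambda,\tfrac{\nu_1+\nu_2}{2},\tfrac{\nu_1-\nu_2}{2}\right),
\]
and $\Delta_k(\lambda,x,y)$ is given by an integral of a positive integrand over $[\max(|y|,|x-\lambda_2|),\,\min(x-\lambda_3,\lambda_1-x)]$, the support of $\delta_{k,2}(\lambda,\cdot)$ is contained in the set where this integration interval is non-empty. The strategy is to translate that condition into linear inequalities in the coordinates $(\nu_1,\nu_2,\nu_3)$ (using $\nu_3=-\nu_1-\nu_2$) and to recognize the outcome as the convex hull.

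First I would set $x=(\nu_1+\nu_2)/2$ and $y=(\nu_1-\nu_2)/2$, so that $\nu_3=-2x$, and unpack the four inequalities
\[
|y|\leq\lambda_1-x,\qquad |y|\leq x-\lambda_3,\qquad |x-\lambda_2|\leq\lambda_1-x,\qquad |x-\lambda_2|\leq x-\lambda_3.
\]
The first two translate directly into $\lambda_3\leq\nu_1,\nu_2\leq\lambda_1$. For the second pair, using $\lambda_2\leq\lambda_1$ and $\lambda_3\leq\lambda_2$ to discard the trivially-satisfied halves of the absolute values, they reduce to $2x\leq\lambda_1+\lambda_2$ and $2x\geq\lambda_2+\lambda_3$ respectively; the trace-zero relation $\lambda_1+\lambda_2+\lambda_3=0$ then converts these into $\nu_3\geq\lambda_3$ and $\nu_3\leq\lambda_1$. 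Altogether, every component of $\nu$ lies in $[\lambda_3,\lambda_1]$.

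Second, I would identify the resulting region with $co(\lambda)$. By the Hardy--Littlewood--P\'olya theorem, $co(S_3\cdot\lambda)$ is the set of points $\nu\in\mathbb{V}$ majorized by $\lambda$. Arranging $\nu_1\geq\nu_2\geq\nu_3$ by $S_3$-symmetry, majorization reads $\nu_1\leq\lambda_1$ together with $\nu_1+\nu_2\leq\lambda_1+\lambda_2$; using $\sum\nu_i=0=\sum\lambda_i$, the second inequality becomes $\nu_3\geq\lambda_3$. Hence $co(\lambda)$ coincides with $\{\nu\in\mathbb{V}:\lambda_3\leq\min_i\nu_i\leq\max_i\nu_i\leq\lambda_1\}$, matching the description in the statement.

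The hard part will be the algebraic bookkeeping inside the two inequalities involving $|x-\lambda_2|$: I must verify that the auxiliary positivity constraints $\lambda_1\geq x$ and $x\geq\lambda_3$, implicit in them, are forced by the main inequalities together with the ordering $\lambda_1\geq\lambda_2\geq\lambda_3$. This is elementary casework but it is where the chamber ordering enters essentially; no deeper ingredient is required beyond it, so the proof should be short once the change of variables has been laid out carefully.
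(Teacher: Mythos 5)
Your proposal is correct and follows essentially the same route as the paper: both reduce the support to the non-emptiness of the integration interval $\bigl[\max(|y|,|x-\lambda_2|),\,\min(x-\lambda_3,\lambda_1-x)\bigr]$, unpack this into the box condition $\lambda_3\leq\nu_i\leq\lambda_1$, and then identify that box with $co(\lambda)$ via majorization. Your appeal to Hardy--Littlewood--P\'olya is just a restatement of the paper's criterion $\lambda^+-\nu^+\in\bigoplus_i\mathbb{R}_+\alpha_i$, and the ``hard part'' you flag is in fact automatic, since $|y|\geq 0$ already forces $\lambda_1-x\geq 0$ and $x-\lambda_3\geq 0$.
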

\begin{proof}
In view of (\ref{supp}) and (\ref{del}) the support of $\delta_{k,2}\left(\lambda,.\right)$  is contain is the set
$$\left\{\nu\in \mathbb{V}; \quad \max \left(\frac{|\nu_1-\nu_2|}{2},\left| \frac{\nu_1+\nu_2}{2}-\lambda_2\right|\right)\leq \min
\Big(\frac{\nu_1+\nu_2}{2}-\lambda_3,\lambda_1-\frac{\nu_1+\nu_2}{2}\Big)\right\}$$
which by straightforward calculus reduced to the set
$$\{\nu\in \mathbb{V};\quad\lambda_3\leq\min(\nu_1,\nu_2,\nu_3)\leq \max(\nu_1,\nu_2,\nu_3)\leq \lambda_1\}.
$$
However, we  known that
$$\nu\in co(\lambda)\quad\Leftrightarrow \quad \lambda^+-\nu^+ \in \bigoplus_{i=1}^N\mathbb{R}_+\alpha_i$$
and  here
$$\lambda^+-\nu^+= \lambda-\nu^+=(\lambda_1-\nu^+_1)(e_1-e_2)+(\nu^+_3-\lambda_3)(e_2-e_3)$$
Then
$$\nu\in co(\lambda)\quad\Leftrightarrow \quad \nu^+_1\leq \lambda_1 \quad \text{and} \quad \nu^+_3\geq \lambda_3,$$
which proves the proposition, since $\nu^+_1=\max(\nu_1,\nu_2,\nu_3)$ and $\nu^+_3= \min(\nu_1,\nu_2,\nu_3)$.
\end{proof}

\subsection{Bessel function of type $A_3$}
Let $\mu,\lambda \in C$, the Weyl chamber. We have
\begin{eqnarray*}
|\overline{\mu}|&=&\mu_1+\mu_2+\mu_3-3\mu_4,\\
\pi_4(\overline{\mu})&=&(\mu_1+\frac{\mu_4}{3},\mu_2+\frac{\mu_4}{3},\mu_3+\frac{\mu_4}{3}),\\
\pi_4(\nu)&=&(\frac{2\nu_1-\nu_2-\nu_3}{3},\frac{2\nu_2-\nu_1-\nu_3}{3},\frac{2\nu_3-\nu_1-\nu_2}{3}).
\end{eqnarray*}
Taking (\ref{bb}) with  the change of variables
\begin{eqnarray*}
z_1&=&\frac{\nu_1+\nu_2+\nu_3}{3},\\
x_1&=&\frac{2\nu_1-\nu_2-\nu_3}{3},
\\x_2&=&\frac{2\nu_2-\nu_1-\nu_3}{3}
\end{eqnarray*}
and put $x=(x_1,x_2, x_3)\in \mathbb{R}^3$,  $x_1+x_2+x_3=0$, we have
\begin{eqnarray*}
&&J_{k,4}(\mu,\lambda)=\int_{\mathbb{R}^3}e^{(\mu_1+\mu_2+\mu_3-3\mu_4)z_1}
J_{k,3}(\pi_3(\overline{\mu}), x)  V_3(x)\\&& \Pi_4(x_1+z_1,x_2+z_1,x_3+z_1,\lambda)\chi_{[\lambda_2,\lambda_1]}(x_1+z_1)
\chi_{[\lambda_3,\lambda_2]}(x_2+z_1)\chi_{[\lambda_4,\lambda_3]}(x_3+z_1)dz_1dx_1dx_2.
\end{eqnarray*}
By inserting (\ref{jf})
\begin{eqnarray*}
J_{k,4}(\mu,\lambda)&=&\int_{\mathbb{R}^5}e^{\mu_1+\mu_2+\mu_3-3\mu_4)z_1+(\mu_1-\mu_3)z_2+(\mu_2-\mu_3)z_3}
\delta_{k,2} ((z_2,z_3,-(z_2+z_3)),x ) V_3(x)\\&& \Pi_4(x_1+z_1,x_2+z_1,x_3+z_1,\lambda)\chi_{[\lambda_2,\lambda_2]}(x_1+z_1)
\chi_{[\lambda_3,\lambda_2]}(x_2+z_1)\chi_{[\lambda_4,\lambda_3]}(x_3+z_1)\\&&\qquad\qquad\qquad
\qquad\qquad\qquad \qquad\qquad\qquad\qquad\qquad\qquad\qquad dz_1dz_2dz_3dx_1dx_2.
\end{eqnarray*}
Now with the change of variables
\begin{eqnarray*}
Z_1&=&z_1+z_2,\\
Z_2&=&z_1+z_3,\\
Z_3&=&z_1-(z_2+z_3)
\end{eqnarray*}
and with $Z=(Z_1,Z_2,Z_3,Z_4)\in \mathbb{R}^4$, such that $Z_1+Z_2+Z_3+Z_4=0$ we have that
$$(\mu_1+\mu_2+\mu_3-3\mu_4)z_1+(\mu_1-\mu_3)z_2+(\mu_2-\mu_3)z_3= \mu_1Z_1+\mu_2Z_2+\mu_3Z_3+\mu_4Z_4=\langle\mu,Z\rangle.$$
Therefore we can write
\begin{eqnarray*}
J_{k,3}(\mu,\lambda)=\int_{\mathbb{R}^3}e^{\langle \mu,Z\rangle}\delta_{k,3}(Z,\lambda) dZ_1dZ_2dZ_3,
\end{eqnarray*}
where
\begin{eqnarray}\label{d2} \nonumber
&&\delta_{k,3}(Z,\lambda)=\\ \nonumber &&\int_{\mathbb{R}^2}\Pi_4(x_1+\frac{1}{3}(Z_1+Z_2+Z_3),x_2+\frac{1}{3}(Z_1+Z_2+Z_3),x_3+\frac{1}{3}(Z_1+Z_2+Z_3),\lambda)\\&&
\delta_{k,2}(\frac{1}{3}(2Z_1-Z_2-Z_3),\frac{1}{3}(2Z_2-Z_1-Z_3),\frac{1}{3}(2Z_3-Z_1-Z_2),x)\nonumber
\chi_{[\lambda_2,\lambda_1]}(x_1+\frac{1}{3}(Z_1+Z_2+Z_3))\\&&
\chi_{[\lambda_3,\lambda_2]}(x_2+\frac{1}{3}(Z_1+Z_2+Z_3))\chi_{[\lambda_4,\lambda_3]}(x_3+\frac{1}{3}(Z_1+Z_2+Z_3))dx_1dx_2.
\end{eqnarray}
Let us now describe the support of $\delta_{k,3}$. In fact, $\delta_{k,3}(Z,\lambda)\neq 0$ if the variables $x$ and $Z$ of the integrant (\ref{d2})
satisfy:
\begin{eqnarray*}
&(1)&\quad\lambda_2\leq x_1+\frac{1}{3}(Z_1+Z_2+Z_3)\leq \lambda_1,\\
&(2)&\quad\lambda_3\leq x_2+\frac{1}{3}(Z_1+Z_2+Z_3)\leq \lambda_2,\\
&(3)&\quad\lambda_4\leq x_3+\frac{1}{3}(Z_1+Z_2+Z_3)\leq \lambda_3,\\
&(4)&\quad x_3\leq \frac{1}{3}(2Z_1-Z_2-Z_3)\leq x_1,\\
&(5)&\quad x_3\leq \frac{1}{3}(2Z_2-Z_1-Z_3)\leq x_1,\\
&(6)&\quad x_3\leq \frac{1}{3}(2Z_3-Z_1-Z_2)\leq x_1.
\end{eqnarray*}
It Follows that
\begin{eqnarray*}
 (1)+(4)\qquad&\Rightarrow&\qquad Z_1\leq \lambda_1,\\
 (1)+(5)\qquad&\Rightarrow&\qquad Z_2\leq \lambda_1,\\
 (1)+(6)\qquad&\Rightarrow&\qquad Z_3\leq \lambda_1,\\
 (1)+(2)+(3)\qquad&\Rightarrow&\qquad Z_4\leq \lambda_1,\\
 (1)+(2)-(6)\qquad&\Rightarrow&\qquad Z_1+Z_2\leq \lambda_1+\lambda_2,\\
(1)+(2)-(5)\qquad&\Rightarrow&\qquad Z_1+Z_3\leq \lambda_1+\lambda_2,\\
(1)+(2)-(4)\qquad&\Rightarrow&\qquad Z_2+Z_3\leq \lambda_1+\lambda_2,\\
(2)+(3)-(4)\qquad&\Rightarrow&\qquad Z_1+Z_4\leq \lambda_1+\lambda_2,\\
(2)+(3)-(5)\qquad&\Rightarrow&\qquad Z_2+Z_4\leq \lambda_1+\lambda_2\\
(2)+(3)-(6)\qquad&\Rightarrow&\qquad Z_3+Z_4\leq \lambda_1+\lambda_2,\\
(1)+(2)+(3)\qquad&\Rightarrow&\qquad Z_1+Z_2+Z_3\leq \lambda_1+\lambda_2+\lambda_3,\\
(3)+(4)\qquad&\Rightarrow&\qquad Z_2+Z_3+Z_4\leq \lambda_1+\lambda_2+\lambda_3,\\
(3)+(5)\qquad&\Rightarrow&\qquad Z_1+Z_3+Z_4\leq \lambda_1+\lambda_2+\lambda_3,\\
(3)+(6)\qquad&\Rightarrow&\qquad Z_1+Z_2+Z_4\leq \lambda_1+\lambda_2+\lambda_3.
\end{eqnarray*}
These inequalities   can be expressed in terms  of $Z^+=(Z^+_1,Z^+_2,Z^+_3,Z^+_4)$ as
\begin{eqnarray*}
&&Z^+_1=\max(Z_1,Z_2,Z_3,Z_4)\leq\lambda_1\\
&&Z^+_1+Z_2^+=\max(Z_1+Z_2,Z_1+Z_3,Z_1+Z_4,Z_2+Z_3,Z_2+Z_4,Z_3+Z_4)\leq\lambda_1+\lambda_2\\
&&Z^+_1+Z_2^++Z^+_3=\max(Z_1+Z_2+Z_3,Z_2+Z_3+Z_4,Z_1+Z_2+Z_4,Z_1+Z_3+Z_4)\\&&\qquad\qquad\qquad\qquad\qquad\qquad
\qquad\qquad\qquad\qquad\qquad\qquad\qquad\qquad\qquad\leq\lambda_1+\lambda_2+\lambda_3
\end{eqnarray*}
which imply that   $Z^+\preceq \lambda$ and therefore $Z\in co(\lambda)$.
\subsection{Case for arbitrary $N$}
After having idea about the case $N=2,3$ it is not hard to see that the formula (\ref{0}) can  be found using recurrence. In fact,
let $\mu,\lambda\in C$ the Weyl chamber. Put for $\nu\in\mathbb{R}^{N-1}$
$$\Omega(\lambda,\nu)=\prod_{i=1}^{N-1}\chi_{[\lambda_{i+1},\lambda_i]}(\nu).$$
With the  change of variables
\begin{eqnarray*}
&&z_1=\frac{|\nu|}{N-1}=\frac{\nu_1+...+\nu_{N-1}}{N-1}
\\ &&x_i=\nu_i-\frac{|\nu|}{N-1};\qquad 1\leq i\leq N-2
\end{eqnarray*}
the formula  (\ref{bb}) becomes,
\begin{eqnarray*}\nonumber
J_{k,N}(\mu,\lambda)
&=&\frac{\Gamma(Nk)}{V_N(\lambda)^{2k-1}\Gamma(k)^{N}}\int_{\mathbb{R}^{N-1}}e^{|\overline{\mu}|z_1}J_{k,N-1}(\pi_{N-1}(\overline{\mu}),x)
V_{N-1}(x)\\&&\Pi_N(\lambda, (x_1+z_1,...,x_{N-1}+z_1))\Omega(\lambda,(x_1+z_1,...,x_{N-1}+z_1)) dz_1dx_1...dx_{N-2}
\end{eqnarray*}
where we put $x=(x_1,...,x_{N-1})$ with $x_{N-1}=-(x_1+...+x_{N-2})$. The recurrence hypothesis says that
\begin{eqnarray*}
J_{k,N-1}(\pi_{N-1}(\overline{\mu}),x)&=&\int_{\mathbb{V}_{N-1}}e^{\langle\pi_{N-1}(\overline{\mu}),z\rangle}\delta_{k,N-1}(x,z)dz.\\
&=& \int_{\mathbb{R}^{N-2}}e^{\sum_{i=1}^{N-1}\left(\overline{\mu}_i-\frac{|\overline{\mu}|}{N-1}\right)z_{i+1}}\delta_{k,N-1}(x,z)dz_2...dz_{N-2}.
\end{eqnarray*}
where  $z=(z_2,...,z_{N})$ with $z_{N}=-(z_2+...+z_{N-1})$ and $\delta_{k,N-1}(.,x)$ is supported in the convex hull of
$S_{N-1}.x$ in $\mathbb{R}^{N-1}$.
Hence  we get
\begin{eqnarray*}\nonumber
J_{k,N}(\mu,\lambda)
&=&\frac{\Gamma(Nk)}{V_N(\lambda)^{2k-1}\Gamma(k)^{N}}\int_{\mathbb{R}^{N-1}}\int_{\mathbb{R}^{N-2}}
e^{|\overline{\mu}|z_1+\sum_{i=1}^{N-1}\left(\overline{\mu}_i-\frac{|\overline{\mu}|}{N-1}\right)z_{i+1}} \delta_{k,N-1}(z,x)\\&&
V_{N-1}(x)\Pi_N(\lambda, (x_1+z_1,...,x_{N-1}+z_1)) \Omega(\lambda,(x_1+z_1,...,x_{N-1}+z_1))\\&&\qquad\qquad\qquad\qquad dz_1dz_2...dz_{N-1}dx_1...dx_{N-2}.
\end{eqnarray*}
Now observing that
\begin{eqnarray*}
|\overline{\mu}|z_1+\sum_{i=1}^{^{N-1}}\left(\overline{\mu}_i-\frac{|\overline{\mu}|}{N-1}\right)z_{i+1}&=&
 \left(\sum_{i=1}^{N-1}\mu_i-(N-1)\mu_N\right)z_1+\sum_{i=1}^{N-1}\mu_i z_{i+1}\\
&=&\sum_{i=1}^{N-1}\mu_i(z_1+z_{i+1})-(N-1)\mu_{N-1}z_1.
\end{eqnarray*}
Then making the change of variables
\begin{eqnarray*}
Z_i=z_1+z_{i+1},\qquad 1\leq i\leq N-1
\end{eqnarray*}
and put $Z=(Z_1,...,Z_N)$ with $Z_N=-(Z_1+...+Z_{N-1}$, we have
\begin{eqnarray*}
J_{k,N}(\mu,\lambda)
&=&\frac{\Gamma(Nk)}{V_N(\lambda)^{2k-1}\Gamma(k)^{N}}\int_{\mathbb{R}^{N-1}}\int_{\mathbb{R}^{N-2}}
e^{\sum_{i=1}^N\mu_iZ_i} \delta_{k,N-1}(\phi(Z),x)
V_{N-1}(x)\Pi_N(\lambda, \theta(Z,x))\\&&\qquad\qquad\qquad\Omega_N(\lambda, \theta(Z,x))dZ_1dZ_2...dZ_{N-1}dx_1...dx_{N-2}\\
&=&\int_{\mathbb{R}^{N-1}}e^{\sum_{i=1}^N\mu_iZ_i} \delta_{k,N}(\lambda,Z)dZ_1...dZ_{N-1}
\\&=&\int_{\mathbb{V}_{N}}e^{\langle\mu,Z\rangle} \delta_{k,N}(\lambda,Z)dZ,
\end{eqnarray*}
with
\begin{eqnarray}\nonumber
\phi(Z)&=&\left(Z_1-\frac{\sum_{i=1}^{N-1} Z_i}{N-1},...,Z_{N_{-1}}-\frac{\sum_{i=1}^{N-1} Z_i}{N-1}\right)\\
\theta(Z,x)&=&\left(x_1+\frac{\sum_{i=1}^{N-1} Z_i}{N-1},...,x_{N-1}+\frac{\sum_{i=1}^{N-1} Z_i}{N-1}\right)\nonumber \\ \nonumber
\delta_{k,N}(\lambda,Z)&=&\frac{\Gamma(Nk)}{V_N(\lambda)^{2k-1}\Gamma(k)^{N}}\int_{\mathbb{R}^{N-2}}
 \delta_{k,N-1}(\phi(Z),x)\\&&\qquad\qquad\qquad
V_{N-1}(x)\Pi_N(\lambda, \theta(Z,x))\Omega_N(\lambda, \theta(Z,x))dx_1...dx_{N-2} \label{dn}.
\end{eqnarray}
Now we write sufficient conditions for which  the integrant (\ref{dn}) does not vanish
\begin{eqnarray*}
&(\Lambda_i)&\qquad \lambda_{i+1}\leq x_i+\frac{\sum_{i=1}^{N-1} Z_i}{N-1}\leq \lambda_i\\
&(\Lambda_I)&\qquad  \sum_{i\in I} Z_i-\frac{|I|}{N-1}\sum_{i=1}^{N-1} Z_i\leq \sum_{i=1}^{|I|}x_i
\end{eqnarray*}
for all $I\subset\{1,2,...,N-1\}$ of cardinally $|I|$. It follows that
$$\sum_{i=1}^{|I|}\Lambda_i+\Lambda_I\quad \Rightarrow\quad \sum_{i\in I}Z_i\leq \sum_{i=1}^{|I|}\lambda_i$$
which proves that $Z^+\leq \lambda$ and then $Z\in co(\lambda)$.
\section{Partially product formula for $J_k$ }
We will first  establish a product formula for $J_k$ provided that a conjecture of Stanley on the multiplication of Jack polynomials is true. The conjecture says that for all partitions $\mu$ and $\lambda$
\begin{eqnarray*}
j_\mu j_\lambda=\sum _{\nu\leq \mu+\lambda}g_{\mu,\lambda}^\nu j_\nu
\end{eqnarray*}
where  $g_{\mu,\lambda}^\nu$ ( the Littlewood-Richardson coefficients )
is a polynomial in $k$ with nonnegative integer coefficients. In particular, $g_{\mu,\lambda}^\nu\geq 0$, what is the interesting facts in our setting.
Hence  we have for all $\mu,\lambda$ partitions,
 \begin{eqnarray*}
 F(\pi(\mu)+\rho_k,.)F(\pi(\lambda)+\rho_k,.)= \sum _{\nu\leq \mu+\lambda} f_{\mu,\lambda}^\nu F(\pi(\nu)+\rho_k,.)
 \end{eqnarray*}
with $f_{\mu,\lambda}^\nu\geq 0$  and
$$\sum_\nu f_{\mu,\lambda}^\nu =1$$
But if $\nu\leq \mu+\lambda$ as partitions then we also have  $\pi(\nu)\preceq \pi(\mu)+\pi(\lambda)$ in the dominance ordering (\cite{Br}, Lemma 3.1 ).
This allows us to write for all $\mu,\lambda\in P^+$
\begin{eqnarray*}
 F(\mu+\rho_k,.)F(\lambda+\rho_k,.)= \sum _{\nu\in P^+;\; \nu\preceq \mu+\lambda} f_{\mu,\lambda}^\nu F(\nu+\rho_k,.).
 \end{eqnarray*}
 To arrive at product formula for $J_k$ we follow the technic used  by M. R\"{o}sler in \cite{R3}. We first write
$$F(n\mu+\rho_k,\frac{z}{n})F(n\lambda+\rho_k,\frac{z}{n})=\int_{\mathbb{R}^N} F(nx+\rho_k,\frac{z}{n})d\gamma_{\mu,\lambda}^n(x), \qquad z\in \mathbb{V}.$$
where
$$d\gamma_{\mu,\lambda}^n=\sum_{\nu\in P^+;\; \nu\preceq \mu+\lambda}f_{\mu,\lambda}^\nu\; \delta_{\frac{\nu}{n}}.$$
According to (\cite{R3}, Lemma 3.2) the  probability measure $ \gamma_{\mu,\lambda}^n$ is  supported in the convex hull $co(\mu+\lambda)$. So, from
  Prohorov's theorem (see \cite{Bi} ) there exists a  probability measure $\gamma_{\mu,\lambda}$ supported in $co(\mu+\lambda)$ and  a subsequence $(\gamma_{\mu,\lambda}^{n_j})_j $ which converges weakly to $\gamma_{\mu,\lambda}$.
  Then by using (\ref{FJ}) it follows that
 $$J_k(\mu,z)J_k(\lambda,z)=\int_{\mathbb{V}}J_k(\xi,z)d\gamma_{\mu,\lambda}(\xi)$$
for all $ z\in\mathbb{V}$ and $\mu,\lambda\in P^+$.\\ Now let $r,s\in \mathbb{Q}^+$ with $r=\frac{a}{b}$ and $s=\frac{c}{b}$,  $a,b,c \in\mathbb{N}$, $b\neq0$. We write
\begin{eqnarray*}
J_k(r\mu,z)J_k(s\lambda,z)&=&J_k(a\mu,\frac{z}{b})J_k(c\lambda,\frac{z}{b})\\
&=&\int_{\mathbb{V}}J_k(\xi,\frac{z}{b})d\gamma_{a\mu,c\lambda}(\xi);\quad z\in\mathbb{R}^d \\
&=&\int_{\mathbb{V}}J_k(\frac{\xi}{b},z)d\gamma_{a\mu,c\lambda}(\xi);\quad z\in\mathbb{R}^d
\end{eqnarray*}
Defining $\gamma_{r\mu,s\lambda}$
as the image measure of $\gamma_{a\mu,c\lambda}$ under the dilation $\xi \rightarrow\frac{\xi}{b}$. We get
$$J_k(r\mu,z)J_k(s\lambda,z)=
 \int_{\mathbb{V}}J_k(\xi,z)d\gamma_{r\mu, s\lambda}(\xi). $$
Now we apply the density argument, since  $\mathbb{Q}^+.P^+\times \mathbb{Q}^+.P^+$ is dense in $C\times C$, where $C$ is the
 Weyl chamber.
Then  Prohorov's theorem yields
 \begin{eqnarray*}
J_k(\mu,z)J_k(\lambda,z)=\int_{\mathbb{V}}J_k(\xi,z)d\gamma_{\mu,\lambda}(\xi);\quad z\in\mathbb{R}^d
\end{eqnarray*}
for all $\mu,\lambda\in C$ with $supp(\gamma_{\mu,\lambda})\subset co(\mu+\lambda)$. This finish our approach for the product formula.
\par
  An important special case of the Stanley conjecture called Peiri formula is where the partition $\lambda=(n)$, $n\in \mathbb{N}$.
Since this formula has already been proved  (see \cite{S}) then we can state the following partial result
\begin{thm}
For all $\mu\in C$ and all $t\geq 0$ there exists a probability measure $\gamma_{\mu,t}$ such that
\begin{eqnarray*}
J_k(\mu,z)J_k(t\beta_1,z)=\int_{\mathbb{V}}J_k(\xi,z)d\gamma_{\mu,t}(\xi);\quad z\in\mathbb{R}^d
\end{eqnarray*}
where $\beta_1=\pi(e_1)$. The measure $\gamma_{\mu,t}$ is supported in $co(\mu+t\beta_1)$.
\end{thm}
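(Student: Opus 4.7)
The plan is to emulate the general product-formula argument sketched in the preceding paragraphs, replacing Stanley's (still conjectural) multiplication formula by its established special case, the Pieri rule \cite{S}: for every partition $\mu$ and every $n\in\mathbb{N}$,
\[
j_\mu\, j_{(n)} \;=\; \sum_{\nu\le \mu+(n)} g^\nu_{\mu,(n)}\, j_\nu,\qquad g^\nu_{\mu,(n)}\ge 0.
\]

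First I would transfer this to the Heckman--Opdam hypergeometric level. Combining (\ref{je}) with (\ref{c}) and invoking Lemma 3.1 of \cite{Br} to pass from partition dominance to the $P$-dominance order, the Pieri rule gives
\[
F(\mu+\rho_k,\cdot)\,F(n\beta_1+\rho_k,\cdot) \;=\; \sum_{\nu\in P^+,\;\nu\preceq\, \mu+n\beta_1} f^\nu_{\mu,n}\, F(\nu+\rho_k,\cdot)
\]
for all $\mu\in P^+$ and $n\in\mathbb{N}$, with $f^\nu_{\mu,n}\ge 0$ summing to $1$ (seen by evaluating at $0$).

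Next I would apply R\"osler's scaling technique recalled just above. Replacing $(\mu,n)$ by $(m\mu,mn)$ and evaluating at $z/m$ realises the left-hand product as an integral of $F(mx+\rho_k,z/m)$ against the discrete probability measure $\gamma^m_{\mu,n}=\sum_\nu f^\nu_{m\mu,mn}\,\delta_{\nu/m}$, which by Lemma 3.2 of \cite{R3} is supported in the compact convex hull $co(\mu+n\beta_1)$. Prohorov's theorem then extracts a weakly convergent subsequence $\gamma^{m_j}_{\mu,n}\to \gamma_{\mu,n\beta_1}$, and passing to the limit through (\ref{FJ}) yields
\[
J_k(\mu,z)\,J_k(n\beta_1,z) \;=\; \int_{\mathbb{V}} J_k(\xi,z)\,d\gamma_{\mu,n\beta_1}(\xi)
\]
for every partition $\mu$ and every $n\in\mathbb{N}$, with $\mathrm{supp}\,\gamma_{\mu,n\beta_1}\subset co(\mu+n\beta_1)$.

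Finally I would lift this to arbitrary $(\mu,t)\in C\times[0,+\infty)$ by two short density steps. For $r=a/b$ and $s=c/b$ in $\mathbb{Q}^+$, the scaling identities $J_k(r\mu,z)=J_k(a\mu,z/b)$ and $J_k(s\beta_1,z)=J_k(c\beta_1,z/b)$, combined with the previous step and with the push-forward of $\gamma_{a\mu,c\beta_1}$ under $\xi\mapsto\xi/b$, produce a probability measure $\gamma_{r\mu,s\beta_1}$ supported in $co(r\mu+s\beta_1)$ satisfying the claimed identity. Since $\mathbb{Q}^+\cdot P^+$ is dense in $C$ and $\mathbb{Q}^+$ is dense in $[0,+\infty)$, and the supports stay inside a fixed compact convex set for $(\mu,t)$ in any bounded neighbourhood, a final Prohorov extraction together with the continuity of $J_k(\cdot,z)$ delivers the sought measure $\gamma_{\mu,t}$. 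The delicate point at each limit stage is the uniform tightness of the approximating families; this is exactly what the containment $\mathrm{supp}\,\gamma^m_{\mu,n}\subset co(\mu+n\beta_1)$ provides, and it survives the dilations and the final density passage, so the weak-convergence argument goes through.
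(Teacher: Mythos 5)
Your proposal is correct and follows essentially the same route as the paper: the Pieri rule in place of Stanley's conjecture, transfer to the hypergeometric level via (\ref{je}), (\ref{c}) and Lemma 3.1 of \cite{Br}, R\"osler's rescaling with Lemma 3.2 of \cite{R3} and Prohorov's theorem, the limit transition (\ref{FJ}), and finally dilation plus density to reach all $(\mu,t)\in C\times[0,+\infty)$. Your writing of the approximating measure as $\sum_\nu f^\nu_{m\mu,mn}\,\delta_{\nu/m}$ is in fact the correct form (the paper's display omits the rescaling of the indices), and your remark on uniform tightness makes explicit what the paper leaves implicit.
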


\end{document}